\def\@settitle{\begin{center}%
  \baselineskip14\p@\relax
    %\bfseries
    \normalfont\LARGE%<- NEW

  \@title
  \end{center}%
}
\newcolumntype{P}[1]{>{\centering\arraybackslash}p{#1}}
\newcolumntype{M}[1]{>{\centering\arraybackslash}m{#1}}
\let\oldmarginpar\marginpar
\renewcommand\marginpar[1]{\-\oldmarginpar[\raggedleft\footnotesize #1]%
	{\raggedright\footnotesize #1}}
\theoremstyle{plain}
\newtheorem{thm}{Theorem}[section]
\newtheorem{lemma}[thm]{Lemma}
\newtheorem{conj}[thm]{Conjecture}
\newtheorem*{theorem*}{Theorem}
\newtheorem*{corollary*}{Corollary}
\newtheorem{prop}[thm]{Proposition}
\theoremstyle{definition}
\newtheorem{definition}[thm]{Definition}
\newtheorem{remark}[thm]{Remark}
\numberwithin{equation}{section}
\renewcommand{\S}{\mathbb{S}}
\newcommand{\N}{\mathbb{N}}
\newcommand{\Z}{\mathbb{Z}}
\newcommand{\SM}{\mathfrak{S}}
\renewcommand{\c}{c^-}
\renewcommand{\d}{\delta}
\newcommand{\sse}{\subset}
\newcommand{\lr}{\longrightarrow}
\newcommand{\tr}{\operatorname{tr}}
\newcommand{\Arf}{\operatorname{Arf}}
\newcounter{daggerfootnote}
\newcommand{\bC}{\mathbb{C}}
\newcommand{\bQ}{\mathbb{Q}}
\newcommand{\bR}{\mathbb{R}}
\renewcommand{\c}{\mathfrak{c}}
\def \vertbar [#1](#2,#3,#4){
    \draw [#1] (#2,#3) -- (#2,#4);
    \draw [fill=white] (#2,#3) circle [radius=0.1];
    \draw [fill=black] (#2,#4) circle [radius=0.1];
}
\providecommand{\leftsquigarrow}{%
  \mathrel{\mathpalette\reflect@squig\relax}%
}
\newcommand{\reflect@squig}[2]{%
  \reflectbox{$\m@th#1\rightsquigarrow$}%
}
\def\Ddots{\mathinner{\mkern1mu\raise\p@
\vbox{\kern7\p@\hbox{.}}\mkern2mu
\raise4\p@\hbox{.}\mkern2mu\raise7\p@\hbox{.}\mkern1mu}}
\def \horline [#1](#2,#3,#4){
    \draw [#1] (#2,#4) -- (#3,#4);
    \draw [fill=white] (#2,#4) circle [radius=0.1];
    \draw [fill=black] (#3,#4) circle [radius=0.1];
}
\def \crossing (#1,#2)(#3,#4){
\draw (#1,#2) -- (#3,#4);
\draw (#1,#4) -- (#3,#2);
}
\DeclareFontFamily{U}{mathb}{}
\DeclareFontShape{U}{mathb}{m}{n}{
  <-5.5> mathb5
  <5.5-6.5> mathb6
  <6.5-7.5> mathb7
  <7.5-8.5> mathb8
  <8.5-9.5> mathb9
  <9.5-11.5> mathb10
  <11.5-> mathbb12
}{}
\tikzset{tangent/.style={decoration={markings,mark=at position #1 with {
      \coordinate (tangent point-\pgfkeysvalueof{/pgf/decoration/mark info/sequence number}) at (0pt,0pt);
      \coordinate (tangent unit vector-\pgfkeysvalueof{/pgf/decoration/mark info/sequence number}) at (1,0pt);
      \coordinate (tangent orthogonal unit vector-\pgfkeysvalueof{/pgf/decoration/mark info/sequence number}) at (0pt,1);
      }},postaction=decorate},
    use tangent/.style={
        shift=(tangent point-#1),
        x=(tangent unit vector-#1),
        y=(tangent orthogonal unit vector-#1)
    },
    use tangent/.default=1
    }
\definecolor{codegreen}{rgb}{0,0.6,0}
\definecolor{codegray}{rgb}{0.5,0.5,0.5}
\definecolor{codepurple}{rgb}{0.58,0,0.82}
\definecolor{backcolour}{rgb}{0.95,0.95,0.92}
\lstdefinestyle{mystyle}{
    backgroundcolor=\color{backcolour},   
    commentstyle=\color{codegreen},
    keywordstyle=\color{magenta},
    numberstyle=\tiny\color{codegray},
    stringstyle=\color{codepurple},
    basicstyle=\ttfamily\footnotesize,
    breakatwhitespace=false,         
    breaklines=true,                 
    captionpos=b,                    
    keepspaces=true,                 
    numbers=left,                    
    numbersep=5pt,                  
    showspaces=false,                
    showstringspaces=false,
    showtabs=false,                  
    tabsize=2
}
\begin{document}

	\title{A binary invariant of matrix mutation}
	
	\subjclass[2010]{Primary: 13F60. Secondary: 15A15.}

	\author{Roger Casals}
	\address{University of California Davis, Dept. of Mathematics, USA}
	\email{casals@math.ucdavis.edu}

\maketitle
\vspace{-1cm}
\begin{abstract}
We construct a binary mutation invariant for skew-symmetric integer matrices. The invariant is not an integer congruence invariant for matrices of odd size: we provide examples of congruent such matrices with different values for the invariant.
\end{abstract}

%%%%%%%%%%%%%%%%%%%%%%%%%%%%%%%%%%%%%%%%%%%%%%%%%%%%%%%%%%%%%%%%%
%%%%%%%%%%%%%%%%%%%%%%%%%%%%%%%%%%%%%%%%%%%%%%%%%%%%%%%%%%%%%%%%%
%%%%%%%%%%%%%%%%%%%%%%%%%%%%%%%%%%%%%%%%%%%%%%%%%%%%%%%%%%%%%%%%%

%%%%%%%%%%%%%%%%%%%%%%%%%%%%%%%%%%%%%%%%%%%%%%%%%%%%%%%%%%%%%%%%%
%%%%%%%%%%%%%%%%%%%%%%%%%%%%%%%%%%%%%%%%%%%%%%%%%%%%%%%%%%%%%%%%%
%%%%%%%%%%%%%%%%%%%%%%%%%%%%%%%%%%%%%%%%%%%%%%%%%%%%%%%%%%%%%%%%%
%\vspace{-1cm}

\section{Introduction}

The object of this note will be to construct a binary mutation invariant for skew-symmetric integer matrices. The invariant is not a congruence invariant over $\Z$ for integer matrices of odd size. Namely, for any odd number, there are skew-symmetric integer matrices of that size which are congruent over $\Z$ and yet their binary invariants differ.\\

%%%%%%%%%%%%%%%
%%%%%%%%%%%%%%%
{\bf Scientific context}. Let $M_{n}(\Z)$ denote the set of $n\times n$ integer matrices. Let $B\in M_{n}(\Z)$ be an $n\times n$ skew-symmetric integer matrix. The first definition of matrix mutation in the mathematical literature is \cite[Section 4]{FominZelevinsky_ClusterI} and we follow the presentation\footnote{Matrix mutation is defined more generally for skew-symmetrizable matrices, cf.~\cite[Section 2.7]{FWZ}. From loc.~cit., rank is preserved under mutation by \cite[Lemma 3.2]{BFZ05}. In this note we focus on (square) skew-symmetric matrices.} in \cite[Section 2.7]{FWZ}. Let $k\in[n]$ be the column index at which we mutate. Consider the following two matrices $J_{k,n},E_{k,n}\in M_n(\Z)$:
\begin{itemize}
\item[-] $J_{k,n}$ denotes the diagonal matrix of size $n\times n$ whose diagonal entries are all $1$, except for the $(k, k)$ entry, which is $-1$.

\item[-] $E_{k,n} = (e_{ij} )$ is the $n\times n$ matrix with $e_{ik} := \mbox{max}(0, -b_{ik})$ for all $i\in[n]$ and all
other entries equal to 0.
\end{itemize}
Let us denote $M_{k}:=J_{k,n}+E_{k,n}$, which depends on $k\in[n]$ and $B$. We refer to $M_{k}$ as the replicating matrix of $B$ at $k\in[n]$. By definition, the mutation of $B$ at $k$ is the matrix
$$\mu_k(B):=M_{k}\cdot B\cdot M_{k}^t.$$
The mutated matrix $\mu_k(B)$ is skew-symmetric and thus mutation can be iterated. Note that mutation at $k$ is involutive, i.e. $\mu_k(\mu_k(B))=B$ for any $k\in[n]$. Typically $\mu_l(\mu_k(B))\neq B$ if $l\neq k$, $l,k\in[n]$. Two matrices $B_1,B_2\in M_n(\Z)$ are said to be equal up to a simultaneous permutation of rows and columns if there exists a permutation matrix $P\in M_n(\Z)$ such that $B_2=P\cdot B_1\cdot P^t$. We use the following definition from \cite[Definition 2.8.1]{FWZ}:

\begin{definition}\label{def:mutation}
Two skew-symmetric matrices $B,B'\in M_n(\Z)$ are mutation equivalent if one can get from $B$ to $B'$ by a sequence of mutations,
possibly followed by a simultaneous permutation of rows and columns.\hfill$\Box$
\end{definition}
Definition \ref{def:mutation} must be put in contrast with the following notion:

\begin{definition}\label{def:congruent}
Two integer matrices $B,B'\in M_n(\Z)$ are said to be congruent over $\Z$ if there exists a unimodular matrix $X\in M_n(\Z)$ such that $B'=X\cdot B\cdot X^t$.\hfill$\Box$
\end{definition}

Throughout this note, congruent always means congruent using only unimodular matrices $X\in M_n(\Z)$ with integer entries, as in Definition \ref{def:congruent} above: e.g.~non-integer entries in $\bQ,\bR$ or $\bC$ are not allowed for such $X$. It follows from Definition \ref{def:mutation} that two skew-symmetric matrices $B,B'\in M_n(\Z)$ that are mutation equivalent must be congruent in $M_n(\Z)$, over $\Z$, as in Definition \ref{def:congruent}. Therefore, any invariant of congruence (over $\Z$) is an invariant of mutation. The invariant factors of the Smith normal form (thus the determinant), and the rank are congruence invariants, see \cite{Newman97}.\footnote{The greatest common divisor of the matrix entries in each column is also a mutation invariant.} The present note is an attempt to use linear algebra to construct an invariant of matrix mutation that is {\it not} an invariant of matrix congruence over $\Z$.

For $n = 2$, mutation negates the entries of the matrix $B$ for any $k\in\{1,2\}$. It is thus immediate to decide whether $B,B'\in M_2(\Z)$ are mutation equivalent. The article \cite{ABB08} presents an algorithm for determining whether two $3\times 3$ skew-symmetric matrices are mutation equivalent, addressing the $n=3$ case. As per usual, the finite type case is better understood, cf.~\cite[Chapter 5]{FWZ}, e.g.~\cite[Theorem 5.11.3.(4)]{FWZ}: the goal is to start exploring the general case, beyond the finite or affine cases. See also the recent work \cite{FominNeville23} on long mutation cycles. Beyond what is listed above, and properties such as the existence of a reddening sequence or a rigid potential \cite[Corollary 8.10]{DWZ}, the author knows of no general mutation invariant, which is not a congruence invariant (over $\Z$), for higher $n\geq4$. We believe this to be an important problem relevant to the theory of cluster algebras, cf.~\cite[Problem 2.8.2]{FWZ} and \cite[Slide 9]{Fomin22}.\\

%%%%%%%%%%%%%%%
%%%%%%%%%%%%%%%
{\bf The $\delta$-invariant and main result}. Let $B\in M_{n}(\Z)$ be an $n\times n$ skew-symmetric integer matrix with entries $B=(b_{ij})$, $i,j\in[n]$. Consider the unipotent matrix $V(B)\in M_{n}(\Z)$ defined by
$$V(B)_{i,j} := \begin{cases}
  b_{ij}  & \text{ if }i<j \\
  1  & \text{ if }i=j \\
  0  & \text{ if }i>j \\
\end{cases}
$$
This matrix satisfies $B=V(B)-V(B)^t$ because $B$ is skew-symmetric. Similarly, consider the symmetric matrix $\mathfrak{S}(B)\in M_{n}(\Z)$ defined by $\mathfrak{S}(B):=V(B)+V(B)^t$. In the notation of \cite[Definition 5.11.2]{FWZ}, $\mathfrak{S}(B)$ is a quasi-Cartan companion of $B$.

\begin{definition}
Let $B\in M_{n}(\Z)$ be an $n\times n$ skew-symmetric integer matrix. By definition, the $\delta$-invariant $\delta(B)$ of $B$ is
$$\delta(B):=\det(\mathfrak{S}(B))\pmod{4}.$$
That is, $\delta(B)$ is the determinant of $\mathfrak{S}(B)$ modulo 4.\hfill$\Box$
\end{definition}

\begin{remark}
For $n\in\Z$ odd, the $\delta$-invariant is binary valued: $\delta(B)\equiv0,2\pmod{4}$ if $n$ is odd. Indeed, $\mathfrak{S}(B)\equiv B\pmod{2}$ and $\det(B)\equiv0\pmod{2}$ because $\det(B)=0$, over $\Z$, as $B$ is skew-symmetric. Therefore $\delta(B)$ is always even in this case. For $n\in\Z$ even, computations indicate that the $\delta$-invariant might still be binary: for $n\equiv 0\pmod{4}$, then $\delta(B)\equiv0,1\pmod{4}$ and for $n\equiv 2\pmod{4}$, then $\delta(B)\equiv 0,3\pmod{4}$.\hfill$\Box$
\end{remark}

The following property, showing that the $\delta$-invariant remains unchanged under simultaneous permutation of rows and columns, will be proven in Section \ref{ssec:perminv}.

\begin{lemma}\label{lem:perm_invariance}
Let $B\in M_{n}(\Z)$ be an $n\times n$ skew-symmetric integer matrix. Then, for any permutation matrix $P\in M_n(\Z)$, the equality $\delta(PBP^t)\equiv\delta(B)\pmod{4}$ holds.\hfill$\Box$
\end{lemma}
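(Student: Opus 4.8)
The plan is to reduce to adjacent transpositions and then to quantify precisely how $\mathfrak{S}(\,\cdot\,)$ fails to commute with conjugation by such a transposition. The adjacent transpositions $s_k=(k,\,k+1)$, $1\le k\le n-1$, generate the symmetric group, so every permutation matrix factors as $P=P_1\cdots P_r$ with each $P_i$ the matrix of an adjacent transposition. Since $PBP^t=P_1\big(P_2\cdots P_r\,B\,P_r^t\cdots P_2^t\big)P_1^t$ and the matrix in parentheses is again skew-symmetric (conjugation by any matrix preserves skew-symmetry), an induction on $r$ reduces Lemma~\ref{lem:perm_invariance} to the single claim that $\delta(P_kBP_k^t)\equiv\delta(B)\pmod 4$ for every skew-symmetric $B\in M_n(\Z)$ and every $k$, where $P_k$ is the permutation matrix of $s_k$.

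Fix $k$ and put $\sigma=s_k$, so that $(P_kBP_k^t)_{ij}=b_{\sigma(i)\sigma(j)}$ and $(P_k\,\mathfrak{S}(B)\,P_k^t)_{ij}=\mathfrak{S}(B)_{\sigma(i)\sigma(j)}$. Comparing these two matrices entrywise, using that $\mathfrak{S}(M)_{ij}$ equals $m_{ij}$ for $i<j$, equals $2$ for $i=j$, and equals $m_{ji}$ for $i>j$: on the diagonal both matrices are $2$; for $i<j$ the two entries agree whenever $\sigma(i)<\sigma(j)$ and are negatives of one another whenever $\sigma(i)>\sigma(j)$; and the only pair $i<j$ with $\sigma(i)>\sigma(j)$ is $(i,j)=(k,k+1)$. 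Hence, writing $A:=P_k\,\mathfrak{S}(B)\,P_k^t$,
$$\mathfrak{S}(P_kBP_k^t)=A+2C,$$
where $C$ is the symmetric matrix all of whose entries vanish except $C_{k,k+1}=C_{k+1,k}=b_{k+1,k}$; symmetry of $C$ follows since $A$ is symmetric, because $\mathfrak{S}(B)$ is.

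Now set $\beta:=b_{k+1,k}$, so that the matrix $2C$ has column $k$ equal to $2\beta\,e_{k+1}$, column $k+1$ equal to $2\beta\,e_k$, and all other columns zero. Expanding $\det(A+2C)$ by multilinearity in columns $k$ and $k+1$ produces four terms: the unperturbed term $\det A$; the term in which both columns are replaced, which carries a factor $4\beta^2$ and is $\equiv 0\pmod 4$; and two mixed terms which, by Laplace expansion along the replaced column, equal $2\beta(-1)^{(k+1)+k}\det A^{(k+1,k)}$ and $2\beta(-1)^{k+(k+1)}\det A^{(k,k+1)}$, where $A^{(p,q)}$ denotes $A$ with its $p$-th row and $q$-th column deleted. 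Both signs equal $-1$, and the symmetry $A=A^t$ gives $A^{(k,k+1)}=\big(A^{(k+1,k)}\big)^t$, whence $\det A^{(k,k+1)}=\det A^{(k+1,k)}$; so the two mixed terms together contribute $-4\beta\det A^{(k+1,k)}\equiv 0\pmod 4$. Therefore $\det\mathfrak{S}(P_kBP_k^t)\equiv\det A=\det\mathfrak{S}(B)\pmod 4$, i.e.\ $\delta(P_kBP_k^t)=\delta(B)$, which closes the induction.

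The step I expect to be the main obstacle is the structural identity in the second paragraph. Because $\mathfrak{S}$ is built from the linear order on $[n]$, it genuinely does not commute with permutation conjugation, and the content of the argument is the bookkeeping showing that conjugating by a single adjacent transposition alters $\mathfrak{S}$ at exactly one off-diagonal pair and only by an overall sign — so that the error term $2C$ has the very simple two-entry shape exploited above. Granting that, the determinant computation is routine: the perturbation is concentrated in a symmetric pair of off-diagonal slots, and the symmetry of $A$ forces the surviving cross-term to vanish modulo $4$.
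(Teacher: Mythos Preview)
Your proof is correct and follows the same overall architecture as the paper: reduce to adjacent transpositions, show that conjugation by $P_k$ commutes with $\mathfrak{S}(\,\cdot\,)$ up to the error $2C$ supported on the single off-diagonal pair $\{(k,k+1),(k+1,k)\}$, then argue that this error does not change the determinant modulo $4$. Your structural identity $\mathfrak{S}(P_kBP_k^t)=P_k\mathfrak{S}(B)P_k^t+2C$ with $C_{k,k+1}=C_{k+1,k}=b_{k+1,k}$ is exactly the paper's Lemma~\ref{lem:perm1} (after the sign $b_{k+1,k}=-b_{k,k+1}$).

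The only genuine difference is the last step. The paper packages the determinant argument into a general lemma (Lemma~\ref{lem:2}(2)): for any symmetric $R$ and any symmetric $Y$ with zero diagonal, $\det(R+2Y)\equiv\det R\pmod 4$, proved via the first-order term $\tr(\mbox{adj}(R)Y)$ of the determinant expansion together with the parity statement in Lemma~\ref{lem:2}(1). You instead exploit that in this particular instance $Y=C$ has rank at most two and is supported in two columns, so multilinearity plus Laplace along those columns gives the cross terms explicitly, and symmetry of $A$ makes the two minors $\det A^{(k,k+1)}$ and $\det A^{(k+1,k)}$ coincide, yielding $-4\beta\det A^{(k+1,k)}$. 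Your route is more elementary and fully self-contained for this lemma; the paper's route proves a stronger statement that it then reuses verbatim in the mutation-invariance proof of Theorem~\ref{thm:main}, where the error term is no longer of rank two and your column-by-column expansion would not suffice.
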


The main result of this article reads as follows:

\begin{thm}\label{thm:main}
Let $B\in M_{n}(\Z)$ be an $n\times n$ skew-symmetric integer matrix. If $B'\in M_{n}(\Z)$ is mutation equivalent to $B$, then $$\delta(B')\equiv\delta(B)\pmod{4}.$$
\hfill$\Box$
\end{thm}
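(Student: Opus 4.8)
The plan is to reduce Theorem \ref{thm:main} to a single mutation step, then analyze how $\mathfrak{S}(B)$ transforms. By Definition \ref{def:mutation} and Lemma \ref{lem:perm_invariance}, it suffices to show $\delta(\mu_k(B)) \equiv \delta(B) \pmod 4$ for every $k \in [n]$. Recall $\mu_k(B) = M_k B M_k^t$ with $M_k = J_{k,n} + E_{k,n}$. The difficulty is that $\mathfrak{S}$ does not commute with this conjugation: while $B = V(B) - V(B)^t$, the matrix $M_k B M_k^t$ need not equal $V(M_k B M_k^t) - V(M_k B M_k^t)^t$ with $V$ applied blindly, because conjugation can move entries across the diagonal and alter which $b_{ij}$ are ``above'' versus ``below.'' So the first key step is to produce an explicit symmetric matrix $\widetilde{\mathfrak{S}}$, congruent to $\mathfrak{S}(B)$ over $\Z$ via $M_k$ (hence with the same determinant exactly), and then compare $\widetilde{\mathfrak{S}} = M_k \mathfrak{S}(B) M_k^t$ with $\mathfrak{S}(\mu_k(B))$ modulo $4$.

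The second key step is the mod-$2$ comparison of the two upper-triangular truncations. Write $\mathfrak{S}(\mu_k(B)) = M_k \mathfrak{S}(B) M_k^t + D$, where $D = \mathfrak{S}(\mu_k(B)) - M_k \mathfrak{S}(B) M_k^t$ is the ``correction'' measuring the failure of $V$ to be natural under mutation. Since $\det(M_k \mathfrak{S}(B) M_k^t) = \det(M_k)^2 \det(\mathfrak{S}(B)) = \det(\mathfrak{S}(B))$ (as $\det(M_k) = \det(J_{k,n}) = -1$), I would aim to show $\det(\mathfrak{S}(\mu_k(B))) \equiv \det(\mathfrak{S}(B)) \pmod 4$ by controlling $D$. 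The mutation rule for entries $b'_{ij}$ of $\mu_k(B)$ is the Fomin--Zelevinsky formula $b'_{ij} = -b_{ij}$ if $i = k$ or $j = k$, and $b'_{ij} = b_{ij} + \tfrac{1}{2}(|b_{ik}|b_{kj} + b_{ik}|b_{kj}|)$ otherwise; the sign-function terms are exactly what create mod-$2$ discrepancies. The crucial arithmetic observation is that $\tfrac{1}{2}(|b_{ik}|b_{kj} + b_{ik}|b_{kj}|) \equiv b_{ik} b_{kj} \pmod 2$, so that $\mathfrak{S}(\mu_k(B)) \equiv M_k \mathfrak{S}(B) M_k^t \pmod 2$, i.e.\ $D \equiv 0 \pmod 2$. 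This reduces the problem to showing that adding an even symmetric matrix $D$ to a fixed integer symmetric matrix $S := M_k \mathfrak{S}(B) M_k^t$ changes $\det$ by a multiple of $4$.

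The third step handles this $\det$ stability. For $S$ a symmetric integer matrix and $D = 2T$ with $T$ symmetric, expand $\det(S + 2T) = \det(S) + 2\sum_{I} \pm \det(S_{\hat I}) \det(T'_I) \cdot (\text{terms}) + (\text{higher order in } 2)$; more precisely, the linear term is $2 \sum_{i,j} T_{ij} \, C_{ij}(S)$ where $C_{ij}(S)$ is the $(i,j)$ cofactor of $S$, and all remaining terms carry a factor of $4$. So it remains to prove $\sum_{i,j} T_{ij} C_{ij}(S) \equiv 0 \pmod 2$. Here I would use that $S = M_k \mathfrak{S}(B) M_k^t$ is itself congruent to the quasi-Cartan companion $\mathfrak{S}(B)$, together with the structural fact that $T$ is supported on the entries changed by mutation and has a controlled parity pattern tied to the $k$-th row/column of $B$; since $S \equiv B' \equiv \mu_k(B) \pmod 2$ is a skew-symmetric-mod-$2$ matrix, its cofactors satisfy parity relations (for instance, when $n$ is odd, $S$ is singular mod $2$ and its adjugate mod $2$ has rank $\le 1$, forcing many cofactor products to vanish).

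The step I expect to be the main obstacle is this last parity computation $\sum_{i,j} T_{ij} C_{ij}(S) \equiv 0 \pmod 2$: it requires matching the explicit combinatorics of the mutation correction $T$ against the cofactor structure of $S$, and the even/odd split in the Remark suggests the argument genuinely sees $n \bmod 2$. A cleaner route I would try in parallel is to avoid cofactors entirely: find an explicit \emph{unimodular} $W = W(B,k) \in M_n(\Z)$ with $W \equiv I \pmod 2$ such that $\mathfrak{S}(\mu_k(B)) = W \, (M_k \mathfrak{S}(B) M_k^t) \, W^t$ exactly, or at least modulo $4$; then $\det(\mathfrak{S}(\mu_k(B))) \equiv \det(W)^2 \det(\mathfrak{S}(B)) \equiv \det(\mathfrak{S}(B)) \pmod 4$ since $\det(W) \equiv 1 \pmod 2$ gives $\det(W)^2 \equiv 1 \pmod 4$ — provided $W$ can be chosen congruent to the identity mod $2$, which is plausible because the mod-$2$ discrepancy $D$ was shown to vanish. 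Constructing such a $W$ (a mod-$4$ ``quasi-Cartan mutation'' companion to the Fomin--Zelevinsky rule) is, I believe, the cleanest form of the proof, and the bulk of the work is verifying its entries satisfy the required congruences by a direct computation split into the cases $i,j \in \{k\}$ versus $i,j \neq k$.
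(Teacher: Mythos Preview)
Your setup matches the paper's: reduce to one mutation, write $M_k\mathfrak{S}(B)M_k^t - \mathfrak{S}(\mu_k(B)) = 2T$ with $T$ symmetric integer, note $\det(M_k\mathfrak{S}(B)M_k^t)=\det(\mathfrak{S}(B))$, and expand the determinant to reduce everything to the congruence $\tr(\mbox{adj}(S)\,T)\equiv 0\pmod 2$. You correctly flag this last congruence as the obstacle, but you do not resolve it, and the two routes you sketch do not close it.

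The missing ingredient is not a structural fact about $S$ or its cofactors; it is a fact about $T$: \emph{its diagonal entries are all even}. Concretely, with $T = M_kV(B)M_k^t - V(\mu_k(B))$ (this is symmetric by Lemma~\ref{lem:1}, since both halves give $\mu_k(B)$ upon subtracting transposes), a direct computation (Lemma~\ref{lem:3}) gives $t_{ii}=(b_{ik}+\max(0,\pm b_{ik}))\max(0,\pm b_{ik})$, which is either $0$ or $2b_{ik}^2$. Your observation that $D\equiv 0\pmod 2$ is correct (indeed $\mathfrak{S}(B)\equiv B\pmod 2$) but strictly weaker: it gives $T$ integer, not $T_{ii}$ even. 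Once $T\bmod 2$ has zero diagonal, the trace congruence follows from a clean general lemma, uniform in $n$ (Lemma~\ref{lem:2}(1)): for any symmetric $X$ and any symmetric $Y$ with zero diagonal, $\tr(XY)\equiv 0\pmod 2$, proved by writing $Y\equiv V-V^t\pmod 2$ and using cyclicity of trace. Take $X=\mbox{adj}(S)$ and $Y$ the mod-$2$ reduction of $T$.

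Your proposed detour through the rank of $S\bmod 2$ does not close the gap on its own: even when $\mbox{adj}(S)\equiv vv^t\pmod 2$, one has $\tr(vv^t\,T)=v^tTv\equiv \sum_i v_iT_{ii}\pmod 2$, which still requires $T_{ii}\equiv 0$. The alternative search for a unimodular $W\equiv I\pmod 2$ realizing the correction as an exact congruence is unnecessary and would face the same diagonal obstruction. In particular, no odd/even split on $n$ is needed in the actual argument.
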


Theorem \ref{thm:main} is proven in Section \ref{ssec:mutinv}. In contrast to Theorem \ref{thm:main}, we show that this $\delta$-invariant is {\it not} a congruence invariant over $\Z$ if $n\in\Z$ is odd:

\begin{prop}\label{prop:not_cong}
Let $n\in\Z$ be odd, $n\geq3$. Then, there exist two skew-symmetric integer matrices $B,B^\circ\in M_n(\Z)$ such that:
\begin{itemize}
    \item[(i)] $B$ and $B^\circ$ are congruent over $\Z$, i.e.~ $\exists X\in M_n(\Z)$ unimodular such that $B^\circ=X\cdot B\cdot X^t$,

    \item[(ii)] $\delta(B)\not\equiv\delta(B^\circ)\pmod{4}$.
\end{itemize} 
\end{prop}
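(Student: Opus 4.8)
The plan is to reduce everything to the case $n=3$ through a block-sum construction, carry out one $3\times 3$ determinant, and then pad. Two elementary facts about block sums will be used. First, if $B = B_1 \oplus B_2$ is block-diagonal, then $V(B) = V(B_1)\oplus V(B_2)$, hence $\mathfrak{S}(B) = \mathfrak{S}(B_1)\oplus\mathfrak{S}(B_2)$ and $\det\mathfrak{S}(B) = \det\mathfrak{S}(B_1)\cdot\det\mathfrak{S}(B_2)$, so $\delta$ is controlled by the product of the block determinants modulo $4$. Second, if $X_i$ is unimodular with $X_i B_i X_i^t = B_i'$, then $X_1\oplus X_2$ is unimodular and conjugates $B_1\oplus B_2$ to $B_1'\oplus B_2'$. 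Consequently it suffices to produce a congruent pair in $M_3(\Z)$ with distinct $\delta$-values together with a skew-symmetric ``filler'' block $C$ of size $2$ whose $\mathfrak{S}(C)$ has odd determinant, so that multiplying by a power of $\det\mathfrak{S}(C)$ cannot wash out the discrepancy modulo $4$.

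The first thing I would pin down is the filler, since this is the only genuinely nonobvious point. Padding by zeros is the naive choice but it fails: $\mathfrak{S}(0_k) = 2I_k$ has determinant $2^k$, which is $\equiv 0\pmod 4$ once $k\geq 2$, so a zero-padded example would have $\delta\equiv 0\pmod 4$ for every odd $n\geq 5$. The fix is to take $C := \begin{pmatrix} 0 & 1\\ -1 & 0\end{pmatrix}$, for which $V(C) = \begin{pmatrix} 1 & 1 \\ 0 & 1\end{pmatrix}$ and $\mathfrak{S}(C) = \begin{pmatrix} 2 & 1 \\ 1 & 2\end{pmatrix}$ has determinant $3$.

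Next I would record the $3\times 3$ computation. For a skew-symmetric $B\in M_3(\Z)$ with above-diagonal entries $a = b_{12}$, $b = b_{13}$, $c = b_{23}$, one has $\mathfrak{S}(B) = \begin{pmatrix} 2 & a & b \\ a & 2 & c \\ b & c & 2\end{pmatrix}$, and expansion gives $\det\mathfrak{S}(B) = 2(4 - a^2 - b^2 - c^2 + abc)$. Taking
\[
B_3 = \begin{pmatrix} 0 & 1 & 0 \\ -1 & 0 & 0 \\ 0 & 0 & 0\end{pmatrix}, \qquad B_3^\circ = \begin{pmatrix} 0 & 1 & 1 \\ -1 & 0 & 0 \\ -1 & 0 & 0\end{pmatrix}
\]
gives $\det\mathfrak{S}(B_3) = 6$ and $\det\mathfrak{S}(B_3^\circ) = 4$, so $\delta(B_3) = 2$ and $\delta(B_3^\circ) = 0$. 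To see that $B_3$ and $B_3^\circ$ are congruent over $\Z$ I would exhibit $X_3 = \begin{pmatrix} 1 & 0 & 0 \\ 0 & 1 & 0 \\ 0 & 1 & 1\end{pmatrix}$, which is unimodular, and check $X_3 B_3 X_3^t = B_3^\circ$ by direct multiplication; alternatively, both matrices are the standard rank-two skew form with unit greatest common divisor of entries, hence congruent over $\Z$ by the classification of skew-symmetric bilinear forms over a principal ideal domain.

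Finally, for general odd $n = 3 + 2m$ I would set $B := B_3\oplus C^{\oplus m}$ and $B^\circ := B_3^\circ\oplus C^{\oplus m}$ in $M_n(\Z)$. By the block-sum facts above, $X := X_3\oplus I_{n-3}$ is unimodular with $XBX^t = B^\circ$, which gives (i); and $\det\mathfrak{S}(B) = 6\cdot 3^m \equiv 2\cdot(-1)^m \equiv 2\pmod 4$ while $\det\mathfrak{S}(B^\circ) = 4\cdot 3^m \equiv 0\pmod 4$, so $\delta(B) = 2 \neq 0 = \delta(B^\circ)$, which gives (ii). The main obstacle, and essentially the only step that requires an idea rather than bookkeeping, is recognizing that the padding must be done with the block $C$ (or some block of odd $\mathfrak{S}$-determinant) rather than with zeros; once that is in place, the argument reduces to the multiplicativity of $\det\mathfrak{S}$ under block sums together with a single $3\times 3$ determinant.
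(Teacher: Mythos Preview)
Your proof is correct. The $3\times3$ computation, the congruence $X_3B_3X_3^t=B_3^\circ$, and the block-sum multiplicativity $\det\mathfrak{S}(B_1\oplus B_2)=\det\mathfrak{S}(B_1)\cdot\det\mathfrak{S}(B_2)$ all check out, and your observation that one must pad with a block $C$ having $\det\mathfrak{S}(C)$ odd (rather than with zeros) is exactly the point that makes the reduction go through.

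The paper takes a different route: it works directly in size $n$ with the ``path'' matrix $A_n=\sum_i(\varepsilon_{i,i+1}-\varepsilon_{i+1,i})$ and the single unimodular shear $X_n=\mbox{Id}_n+\varepsilon_{1n}$, computing $\delta(A_n)$ and $\delta(X_nA_nX_n^t)$ explicitly for each odd $n$ and observing that they swap between $0$ and $2$. Your approach trades that uniform family for a once-and-for-all $3\times3$ example plus a padding trick; this is more modular and avoids having to track a determinant of size $n$, at the cost of introducing the auxiliary block $C$. Both arguments are elementary and of comparable length; yours has the minor advantage that it makes transparent why the $\delta$-invariant behaves multiplicatively under block sums, which is a useful structural fact in its own right.
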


 Proposition \ref{prop:not_cong} is proven in Section \ref{ssec:prop_not_cong}. A computer randomly generating two congruent skew-symmetric matrices of odd size quickly finds examples of such $B,B^\circ$ with different $\delta$-invariants.\footnote{The author has implemented such computer calculations in SageMath, see Appendix (Section \ref{sec:app}).} I do not know of two congruent skew-symmetric integer matrices of even size with different $\delta$-invariants.\\

%%%%%%%%%%%%%%%
%%%%%%%%%%%%%%%
{\bf Acknowledgements}. We thank M.~Sherman-Bennett and D.~Weng for helpful comments. We are grateful to S.~Fomin for useful feedback on an initial draft. R.~Casals is supported by the NSF CAREER DMS-1942363, a Sloan Research Fellowship of the Alfred P Sloan Foundation and a UC Davis College of L\&S Dean's Fellowship.\hfill$\Box$

%%%%%%%%%%%%%%%
%%%%%%%%%%%%%%%
{\bf Notation}. We denote $[n]:=\{1,2,\ldots,n\}\sse\N$ and write $M_n(\Z)$ for the set of $n\times n$ integer matrices. The determinant and trace of a matrix $A$ are respectively denoted $\det(X)$ and $\tr(X)$. The transpose of a matrix $X$ is denoted by $X^t$ and its adjugate by $\mbox{adj}(X)$.\hfill$\Box$

\section{Invariance properties of $\delta(B)$}\label{sec:invariance}
In this section we prove Lemma \ref{lem:perm_invariance} and Theorem \ref{thm:main}, concluding that the $\delta$-invariant is an invariant of matrix mutation. Invariance under simultaneous permutations of rows and columns is established in Subsection \ref{ssec:perminv}. Mutation invariance is proven in Subsection \ref{ssec:mutinv}.
%%%%%%%%%%%%%%%
%%%%%%%%%%%%%%%
\subsection{Three lemmas about matrices} The arguments we present to prove Lemma \ref{lem:perm_invariance} and Theorem \ref{thm:main} use the following three lemmas. Lemmas \ref{lem:1} and \ref{lem:2} are general linear algebra, not to do with matrix mutation. Lemma \ref{lem:3} uses features specific to matrix mutation and replicating matrices.
\begin{lemma}\label{lem:1}
Let $Z\in M_n(\Z)$ be a skew-symmetric. Suppose that $V,W\in M_n(\Z)$ satisfy $Z=V-V^t$ and $Z=W-W^t$. The $V-W$ is symmetric.
\end{lemma}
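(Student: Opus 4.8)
The plan is to exploit the bilinearity of the transpose and subtract the two defining equations. From the hypotheses we have $V - V^{t} = Z = W - W^{t}$, so rearranging gives $V - W = V^{t} - W^{t}$. The key observation is that $V^{t} - W^{t} = (V - W)^{t}$, since transposition is additive. Hence $V - W = (V - W)^{t}$, which is precisely the statement that $V - W$ is symmetric. Integrality is automatic because $V, W \in M_{n}(\Z)$ implies $V - W \in M_{n}(\Z)$, so no care is needed there.

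The only genuine step is the rearrangement $V - W = V^{t} - W^{t}$, and I would write it on a single display line to avoid any issue; there is no real obstacle here, as the claim is a formal consequence of the two identities and the linearity of $X \mapsto X^{t}$. I would phrase the proof in two sentences: first record $V - W = V^{t} - W^{t}$ from the two given equalities, then identify the right-hand side with $(V-W)^{t}$ and conclude. Note that skew-symmetry of $Z$ itself is not even used in the argument — the conclusion holds for any common "decomposition" $Z = V - V^{t} = W - W^{t}$ — but it is the natural setting in which such $V, W$ arise (e.g. $V = V(B)$ for a skew-symmetric $B$), so I would keep the hypothesis as stated.

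One could alternatively remark that the set of matrices $V$ with $V - V^{t} = Z$ is a coset of the space of symmetric matrices (the solution set of the homogeneous equation $V - V^{t} = 0$), which makes the difference of any two solutions symmetric; but for the purposes of this note the direct two-line computation is cleaner, and that is the write-up I would give.
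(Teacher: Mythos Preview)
Your proof is correct and essentially identical to the paper's: the paper writes the single chain $(V-W)^{t}=V^{t}-W^{t}=(V-Z)+(Z-W)=V-W$, which is the same rearrangement you describe. Your observation that skew-symmetry of $Z$ is not actually used is also accurate.
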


\begin{proof}
This follows from $(V-W)^t=V^t-W^t=(V-Z)+(Z-W)=V-W$.
\end{proof}

\begin{lemma}\label{lem:3}
Let $B\in M_{n}(\Z)$ be an $n\times n$ skew-symmetric integer matrix and $M_k$ its matrix replicating matrix at $k\in[n]$. Then the $i$th diagonal term $\tau_{ii}$, $i\in[n]$, of the difference
$$M_k\cdot\SM(B)\cdot M_k^t-\SM(\mu_k(B))$$
is of the form $\tau_{ii}=2t_{ii}$ where
$$t_{ii}:= \begin{cases}
  (b_{ik}+\max(0,b_{ik}))\max(0,b_{ik})  & \text{ if }i<k, \\
  \qquad \qquad \qquad 0  & \text{ if }i=k, \\
    (b_{ik}+\max(0,-b_{ik}))\max(0,-b_{ik})   & \text{ if }i>k.
\end{cases}
$$
In particular, $t_{ii}\equiv0\pmod{2}$ and $\tau_{ii}\equiv0\pmod{4}$ for all $i\in[n]$.
\end{lemma}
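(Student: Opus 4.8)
The plan is to strip off the symmetric part and reduce the statement to a single diagonal entry. First I would set $W:=M_k\,V(B)\,M_k^t$. Since $\SM(B)=V(B)+V(B)^t$ we have $M_k\SM(B)M_k^t=W+W^t$; on the other hand $W-W^t=M_k\big(V(B)-V(B)^t\big)M_k^t=M_kBM_k^t=\mu_k(B)$, and also $V(\mu_k(B))-V(\mu_k(B))^t=\mu_k(B)$, so Lemma \ref{lem:1} applies and shows that $S:=W-V(\mu_k(B))$ is symmetric. As $M_k\SM(B)M_k^t$ and $\SM(\mu_k(B))$ are both symmetric matrices, it follows that
$$M_k\SM(B)M_k^t-\SM(\mu_k(B))=(W+W^t)-\big(V(\mu_k(B))+V(\mu_k(B))^t\big)=S+S^t=2S.$$
Hence $\tau_{ii}=2S_{ii}=2\big(W_{ii}-V(\mu_k(B))_{ii}\big)=2(W_{ii}-1)$, using that $V$ of a skew-symmetric matrix carries $1$'s on its diagonal. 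So it remains to take $t_{ii}:=W_{ii}-1$ and to compute $W_{ii}$.

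For the second step I would unwind $M_k=J_{k,n}+E_{k,n}$: it agrees with the identity matrix outside column $k$, and that column has entry $c_i:=\max(0,-b_{ik})\ge 0$ in row $i\neq k$ and $-1$ in row $k$. Hence the $i$th row of $M_k$ is $e_i^t+c_i e_k^t$ for $i\neq k$, and $-e_k^t$ for $i=k$, where $e_1,\dots,e_n$ is the standard basis. This gives at once $W_{kk}=e_k^tV(B)e_k=V(B)_{kk}=1$, so $t_{kk}=0$, while for $i\neq k$
$$W_{ii}=\big(e_i^t+c_ie_k^t\big)V(B)\big(e_i+c_ie_k\big)=V(B)_{ii}+c_i\big(V(B)_{ik}+V(B)_{ki}\big)+c_i^{\,2}V(B)_{kk}=1+c_i\,\SM(B)_{ik}+c_i^{\,2},$$
since $V(B)_{ii}=V(B)_{kk}=1$ and $V(B)_{ik}+V(B)_{ki}=\SM(B)_{ik}$. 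Because $V(B)$ is upper triangular with unit diagonal, $\SM(B)_{ik}=b_{ik}$ if $i<k$ and $\SM(B)_{ik}=-b_{ik}$ if $i>k$ (recall $b_{ki}=-b_{ik}$); substituting this into $t_{ii}=W_{ii}-1=c_i\big(\SM(B)_{ik}+c_i\big)$ and simplifying with the identity $b_{ik}+\max(0,-b_{ik})=\max(0,b_{ik})$ yields the displayed closed form for $t_{ii}$ in the three cases $i<k$, $i=k$, $i>k$. In each case $t_{ii}$ comes out as $2$ times a perfect square (or $0$), so $t_{ii}\equiv 0\pmod 2$ and therefore $\tau_{ii}=2t_{ii}\equiv 0\pmod 4$.

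I expect the main (and essentially only) obstacle to be the bookkeeping in the second paragraph: one must use the strict inequalities in the definition of $V(B)$ to decide which of $V(B)_{ik}$, $V(B)_{ki}$ is $0$ and which is $\pm b_{ik}$, keep track of the sign from $b_{ki}=-b_{ik}$, and treat the boundary index $i=k$ separately, noting that the $-1$ sitting at position $(k,k)$ of $M_k$ enters the computation of $W_{kk}$ squared and thus leaves no trace. The structural step — that the difference of the two quasi-Cartan companions is exactly $2\big(M_kV(B)M_k^t-V(\mu_k(B))\big)$, so its diagonal is automatically even and one only needs $W_{ii}-1$ to be even as well — drops out of Lemma \ref{lem:1} together with the symmetry of $M_k\SM(B)M_k^t$ and $\SM(\mu_k(B))$.
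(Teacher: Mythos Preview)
Your overall strategy is exactly the paper's: show that $M_k\SM(B)M_k^t-\SM(\mu_k(B))=2\big(M_kV(B)M_k^t-V(\mu_k(B))\big)$ and then compute the diagonal of $W=M_kV(B)M_k^t$. The paper does the latter in two steps (first $A=V(B)M_k^t$, then $M_kA$), whereas you write the $i$th row of $M_k$ as $e_i^t+c_ie_k^t$ and expand $W_{ii}$ directly, which is a bit cleaner. Your formula $t_{ii}=c_i\big(\SM(B)_{ik}+c_i\big)$ with $c_i=\max(0,-b_{ik})$ is correct.

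There is, however, a genuine gap at the sentence ``simplifying with the identity $b_{ik}+\max(0,-b_{ik})=\max(0,b_{ik})$ yields the displayed closed form''. If you actually carry out that simplification you do \emph{not} recover the displayed formula. For $i<k$ your expression becomes
\[
t_{ii}=\max(0,-b_{ik})\cdot\max(0,b_{ik})=0,
\]
whereas the displayed formula $(b_{ik}+\max(0,b_{ik}))\max(0,b_{ik})$ equals $2b_{ik}^2$ when $b_{ik}>0$; and for $i>k$ your expression is $\max(0,-b_{ik})\big(-b_{ik}+\max(0,-b_{ik})\big)$, which equals $2b_{ik}^2$ when $b_{ik}<0$, while the displayed formula collapses to $\max(0,b_{ik})\max(0,-b_{ik})=0$. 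A tiny example makes this visible: for $n=2$, $k=1$, $b_{21}=-1$ one has $W_{22}=3$, so $t_{22}=2$, contradicting the displayed value $0$. In other words, the exact closed form printed in the lemma (and reproduced in the paper's proof) carries a sign slip; your computation is the correct one, and your claimed ``simplification'' to the printed formula is not valid. You should instead record your own formula and observe directly that it is always $0$ or $2b_{ik}^2$. The crucial conclusion $t_{ii}\equiv 0\pmod 2$, hence $\tau_{ii}\equiv 0\pmod 4$, is unaffected, and that is all that is used downstream in the proof of Theorem~\ref{thm:main}.
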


\begin{proof}
Consider the unipotent matrix $V(B)$ and $M_k$: we first compute the diagonal entries of $M_k\cdot V(B)\cdot M_k^t$. The diagonal entries of $A:=V(B)\cdot M_k^t$ are as follows:
$$A_{ii}= \begin{cases}
  b_{ik}\max(0,b_{ik})+1  & \text{ if }i<k, \\
  \qquad \quad -1  & \text{ if }i=k, \\
\qquad \quad    1  & \text{ if }i>k,
\end{cases}
$$
for $i\in[n]$. The $k$th row of $A$ is the transpose of the $k$th column of $M_k$, i.e.~$A_{ki}=\mbox{max}(0, -b_{ik})$ for $i\in[n]$. Since $M_k$ coincides with the identity away from the $k$th column, the diagonal entries of $A$ and its $k$th row suffice to compute the diagonal entries of the product $M_kA=M_kV(B)M_k^t$. These diagonal entries of $M_kV(B)M_k^t$ are:
$$(M_kV(B) M_k^t)_{ii}= \begin{cases}
  (b_{ik}+\max(0,b_{ik}))\max(0,b_{ik})+1  & \text{ if }i<k, \\
  \qquad \qquad \qquad 1  & \text{ if }i=k, \\
    (b_{ik}+\max(0,-b_{ik}))\max(0,-b_{ik})+1   & \text{ if }i>k.
\end{cases}
$$
The diagonal entries of $V(\mu_k(B))$ are all 1, as this is a unipotent matrix. Therefore, the diagonal entries of the difference $M_kV(B)M_k^t-V(\mu_k(B))$ are:
$$(M_kV(B)M_k^t-V(\mu_k(B)))_{ii}= \begin{cases}
  (b_{ik}+\max(0,b_{ik}))\max(0,b_{ik})  & \text{ if }i<k, \\
  \qquad \qquad \qquad 0  & \text{ if }i=k, \\
    (b_{ik}+\max(0,-b_{ik}))\max(0,-b_{ik})   & \text{ if }i>k.
\end{cases}
$$
Since $\SM(B)=V(B)+V(B)^t$ and $\SM(\mu_k(B))=V(\mu_k(B))+V(\mu_k(B))^t$, the diagonal terms $\tau_{ii}$ of the difference $M_k\cdot\SM(B)\cdot M_k^t-\SM(\mu_k(B))$ are twice the diagonal terms of the difference $M_kV(B)M_k^t-V(\mu_k(B))$. This proves the required identity in the statement for the diagonal terms $\tau_{ii}$.\\

\noindent In order to conclude that $t_{ii}$ is even, and thus $\tau_{ii}\equiv0\pmod{4}$ for all $i\in[n]$, it suffices to note that
$$(b_{ik}+\max(0,b_{ik}))\max(0,b_{ik})\equiv
\begin{cases}
  (b_{ik}+0)\cdot 0\equiv0  & \text{ if }\max(0,b_{ik})=0, \\
  \qquad \quad 2b_{ik}^2  & \text{ if }\max(0,b_{ik})\neq0.
\end{cases}
$$
Therefore $t_{ii}$ is always even in either case and $\tau_{ii}\equiv0\pmod{4}$ for all $i\in[n]$. The argument for $i>k$, i.e.~showing $(b_{ik}+\max(0,-b_{ik}))\max(0,-b_{ik})\equiv 0\pmod{2}$ and thus $\tau_{ii}\equiv0\pmod{4}$, is identical.
\end{proof}

\begin{lemma}\label{lem:2}
Let $Y\in M_n(\Z)$ a symmetric matrix. Suppose that $Y$ has all diagonal terms equal to zero, i.e.~ $Y=(y_{ij})$ satisfies $y_{ii}=0$ for all $i\in[n]$. Then:
\begin{enumerate}
    \item $\tr(XY)\equiv 0\pmod{2}$ for any symmetric matrix $X\in M_n(\Z)$, i.e.~$\tr(XY)$ is even if $X=X^t$.
    \item $\det(R)\equiv\det(R+2Y)\pmod{4}$ for any symmetric matrix $R\in M_n(\Z)$.
\end{enumerate}

\end{lemma}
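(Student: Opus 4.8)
The plan is to prove part (1) first and then feed it into part (2). For part (1), I would expand $\tr(XY)=\sum_{i}\sum_{j}X_{ij}Y_{ji}$ and use $Y=Y^t$ to rewrite it as $\sum_{i,j}X_{ij}Y_{ij}$. The diagonal terms $X_{ii}Y_{ii}$ all vanish because $y_{ii}=0$, and for $i\neq j$ the two contributions indexed by $(i,j)$ and $(j,i)$ combine, using $X=X^t$ and $Y=Y^t$, into $2X_{ij}Y_{ij}$. Hence $\tr(XY)=2\sum_{i<j}X_{ij}Y_{ij}\equiv 0\pmod 2$. (Symmetry of $X$ is exactly what makes the pairing work: without it one only gets $(X_{ij}+X_{ji})Y_{ij}$.)

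For part (2) the idea is to isolate the part of $\det(R+2Y)$ that is linear in $Y$ and show it is divisible by $4$. Expanding the determinant by multilinearity in its $n$ columns, writing the $k$th column of $R+2Y$ as the $k$th column of $R$ plus $2$ times the $k$th column of $Y$, gives
$$\det(R+2Y)=\sum_{S\subseteq[n]}2^{|S|}\det\big(R^{(S)}\big),$$
where $R^{(S)}$ is obtained from $R$ by replacing, for each $k\in S$, the $k$th column by the $k$th column of $Y$. Since $R$ and $Y$ have integer entries, every $\det(R^{(S)})$ is an integer, so all terms with $|S|\geq 2$ are $\equiv 0\pmod 4$, leaving
$$\det(R+2Y)\equiv\det(R)+2\sum_{k=1}^{n}\det\big(R^{(k)}\big)\pmod 4,$$
where $R^{(k)}:=R^{(\{k\})}$. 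A cofactor expansion of $\det(R^{(k)})$ along its $k$th column — whose cofactors involve only the other columns and therefore coincide with the corresponding cofactors of $R$ — yields $\det(R^{(k)})=\sum_i Y_{ik}\,\mbox{adj}(R)_{ki}=(\mbox{adj}(R)\,Y)_{kk}$, hence $\sum_k\det(R^{(k)})=\tr(\mbox{adj}(R)\,Y)$ and
$$\det(R+2Y)\equiv\det(R)+2\,\tr(\mbox{adj}(R)\,Y)\pmod 4.$$

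It then remains to invoke part (1): since $R$ is symmetric, $\mbox{adj}(R)^t=\mbox{adj}(R^t)=\mbox{adj}(R)$, so $\mbox{adj}(R)\in M_n(\Z)$ is symmetric, while $Y$ is symmetric with zero diagonal; thus $\tr(\mbox{adj}(R)\,Y)\equiv 0\pmod 2$, the correction term $2\,\tr(\mbox{adj}(R)\,Y)$ is $\equiv 0\pmod 4$, and $\det(R+2Y)\equiv\det(R)\pmod 4$. The only real obstacle is the bookkeeping in the multilinear expansion — confirming that the linear-in-$Y$ part of $\det(R+2Y)$ is precisely $2\,\tr(\mbox{adj}(R)\,Y)$ and that all remaining terms carry a factor of $4$ (equivalently one may quote Jacobi's formula $\tfrac{d}{dt}\det(R+tY)|_{t=0}=\tr(\mbox{adj}(R)Y)$ together with the integrality of the Taylor coefficients of the polynomial $t\mapsto\det(R+tY)$). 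Once that is in place, the symmetry of the adjugate and part (1) close the argument with no further computation.
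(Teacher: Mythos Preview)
Your proof is correct and, for Part~(2), essentially identical to the paper's: both isolate the linear-in-$Y$ term of $\det(R+tY)$ as $\tr(\mbox{adj}(R)Y)$, observe the higher-order terms carry a factor of~$4$, and then invoke Part~(1) together with symmetry of the adjugate. Your multilinear column expansion is in fact a slightly more explicit justification of the integrality of the Taylor coefficients than the paper's appeal to Hasse derivatives.

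For Part~(1) there is a small methodological difference worth noting. You pair off the $(i,j)$ and $(j,i)$ contributions directly in the double sum $\tr(XY)=\sum_{i,j}X_{ij}Y_{ij}$, obtaining $2\sum_{i<j}X_{ij}Y_{ij}$. The paper instead writes $Y\equiv V-V^t\pmod 2$ for $V$ upper-triangular, then uses $\tr(XV)-\tr((VX)^t)=0$ via cyclicity and $X=X^t$. Both arguments are one-liners; yours is the more transparent entrywise computation, while the paper's packages the same pairing into a trace identity. Neither buys anything the other does not.
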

\begin{proof} For Part (1), consider the unipotent matrix $V:=V(Y)$. Then $Y\equiv V-V^t\pmod{2}$. Therefore $$XY\equiv X(V-V^t)\equiv XV-XV^t\pmod{2}.$$
Since $X=X^t$ is symmetric, $XV^t=X^tV^t=(VX)^t$. Hence
$$\tr(XY)\equiv \tr(XV-(VX)^t)\equiv \tr(XV)-\tr((VX)^t)\equiv \tr(XV)-\tr(VX)\equiv 0\pmod{2},$$
where we have used $\tr(A^t)=\tr(A)$ and the cyclicity $\tr(AF)=\tr(FA)$ for any $A,F\in M_n(\Z)$.

For Part (2), consider the expansion of the determinant
\begin{equation}\label{eq:taylordet}
\det(R+tY)=\det(R)+\tr(\mbox{adj}(R)Y)\cdot t+\mathcal{O}(t^2),
\end{equation}
where $\mbox{adj}(R)$ denotes the adjugate of $R$. Since $R$ is symmetric, its adjugate $X:=\mbox{adj}(R)$ is symmetric. By Part (1), $\tr(\mbox{adj}(R)Y)$ is even. The identity above thus yields
$$\det(R+2Y)=\det(R)+2\tr(\mbox{adj}(R)Y)+\mathcal{O}(4),$$
where $2\tr(\mbox{adj}(R)Y)$ is divisible by $4$ and $\mathcal{O}(4)$ indicates all other terms, also divisible by $4$. Note that the expansion (\ref{eq:taylordet}) of the determinant is valid in finite characteristic if we use Hasse derivatives to compute the coefficients. In conclusion, reduction modulo 4 yields the required equality
$$\det(R+2Y)\equiv\det(R)\pmod{4}.$$
\end{proof}

\begin{remark}
I thank L.~Shen, who pointed out to me that Lemma \ref{lem:2}.(2) can be proven alternatively by showing that
$\det(R)\equiv\det(R+2(\varepsilon_{ij}+\varepsilon_{ji}))\pmod{4}$ via direct computation, where $R\in M_n(\Z)$ is any symmetric matrix and $\varepsilon_{ij}\in M_n(\Z)$ is the $(i,j)$-elementary matrix, with zero entries everywhere except for the $(i,j)$-entry, which equals 1.\hfill$\Box$
\end{remark}
%%%%%%%%%%%%%%%
%%%%%%%%%%%%%%%
\subsection{Proof of Lemma \ref{lem:perm_invariance}}\label{ssec:perminv} We now show that the $\delta$-invariant remains unchanged under simultaneous permutation of rows and columns. Let us denote by $\varepsilon_{ij}\in M_n(\Z)$ the elementary matrix with zero entries everywhere except for the $(i,j)$-entry, which equals 1. Then:

\begin{lemma}\label{lem:perm1}
Let $P\in M_n(\Z)$ be the permutation matrix associated to the simple transposition $s_k$, $k\in[n-1]$. Then $P\SM(B)P^t-\SM(PBP^t)=2b_{k,k+1}(\varepsilon_{k,k+1}+\varepsilon_{k+1,k})$.
\end{lemma}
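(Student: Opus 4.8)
The plan is to compute both symmetric matrices entrywise and subtract. First I would record the explicit shape of $\SM(B)$: since $V(B)$ is upper unitriangular with $V(B)_{ij}=b_{ij}$ for $i<j$, the symmetric matrix $\SM(B)=V(B)+V(B)^t$ has every diagonal entry equal to $2$ and off-diagonal entries $\SM(B)_{ij}=b_{ij}$ if $i<j$ and $\SM(B)_{ij}=b_{ji}=-b_{ij}$ if $i>j$; in short, $\SM(B)_{ij}$ is the ``upper-triangular value'' $b_{\min(i,j),\max(i,j)}$ of $B$ for $i\neq j$.

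Next, write $\sigma=s_k$ for the transposition swapping $k$ and $k+1$. Since $\sigma$ is an involution, conjugation by the permutation matrix $P$ merely relabels indices: $(PAP^t)_{ij}=A_{\sigma(i),\sigma(j)}$ for every $A\in M_n(\Z)$. Applying this with $A=\SM(B)$ shows $P\SM(B)P^t$ is symmetric with all diagonal entries $2$ (because $\SM(B)$ is), and $\SM(PBP^t)$ is symmetric with all diagonal entries $2$ by construction. Hence the difference $D:=P\SM(B)P^t-\SM(PBP^t)$ is symmetric with zero diagonal, and it suffices to compute $D_{ij}$ for $i<j$.

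For $i<j$ one has $\SM(PBP^t)_{ij}=(PBP^t)_{ij}=B_{\sigma(i),\sigma(j)}$, whereas $(P\SM(B)P^t)_{ij}=\SM(B)_{\sigma(i),\sigma(j)}$ equals $B_{\sigma(i),\sigma(j)}$ when $\sigma(i)<\sigma(j)$ and equals $-B_{\sigma(i),\sigma(j)}$ when $\sigma(i)>\sigma(j)$. Thus $D_{ij}=0$ whenever $\sigma$ preserves the order of the pair $i<j$, and $D_{ij}=-2B_{\sigma(i),\sigma(j)}$ otherwise. The transposition $s_k$ reverses the order of exactly one increasing pair, namely $(k,k+1)$, so $D_{ij}=0$ unless $(i,j)=(k,k+1)$, in which case $D_{k,k+1}=-2B_{\sigma(k),\sigma(k+1)}=-2B_{k+1,k}=2b_{k,k+1}$. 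Symmetrizing then gives $D=2b_{k,k+1}(\varepsilon_{k,k+1}+\varepsilon_{k+1,k})$, as claimed. The only real work here is the index and sign bookkeeping — keeping straight which of the two indices carries the upper-triangular entry of $B$ and the convention $(PAP^t)_{ij}=A_{\sigma(i),\sigma(j)}$ — but there is no conceptual obstacle, since $s_k$ changes the comparison order of only the pair $\{k,k+1\}$.
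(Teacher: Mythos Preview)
Your proof is correct and takes essentially the same approach as the paper: a direct entrywise computation exploiting that $s_k$ reverses the order of exactly one increasing pair. The only cosmetic difference is that the paper first proves the analogous identity $PV(B)P^t-V(PBP^t)=b_{k,k+1}(\varepsilon_{k,k+1}+\varepsilon_{k+1,k})$ at the level of the unipotent part $V(B)$ and then symmetrizes, whereas you compute the explicit entry description of $\SM(B)$ up front and work directly with it; the underlying observation and bookkeeping are identical.
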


\begin{proof}
Since $\SM(B)=V(B)+V(B)^t$, it suffices to show the following identity:
\begin{equation}\label{eq:PVP}
\tag{*}
PV(B)P^t-V(PBP^t)=b_{k,k+1}(\varepsilon_{k,k+1}+\varepsilon_{k+1,k}).
\end{equation}
Given any matrix $A\in M_n(\Z)$, all the rows and columns of $PAP^t$ coincide with those of $A$ except for the $k$th and $(k+1)$st rows and $k$th and $(k+1)$st columns. It thus suffices to study the difference of $PV(B)P^t-V(PBP^t)$ at those rows and columns. By construction, the entries of these two matrices $V(PBP^t),PV(B)P^t$ coincide except at the entries $(k+1,k)$ and $(k,k+1)$. These entries are:
$$(PV(B)P^t)_{i,j} := \begin{cases}
  b_{k,k+1}  & \text{ if }(i,j)=(k+1,k) \\
  0  & \text{ if }(i,j)=(k,k+1) \\
\end{cases},\quad (V(PBP^t))_{i,j} := \begin{cases}
  0  & \text{ if }(i,j)=(k+1,k) \\
  -b_{k,k+1}  & \text{ if }(i,j)=(k,k+1). \\
\end{cases}
$$
This implies Identity (\ref{eq:PVP}) above and thus the lemma.
\end{proof}

\noindent Let us now use Lemma \ref{lem:perm1} to show Lemma \ref{lem:perm_invariance}, as follows. Since any permutation can be written as a product of simple transpositions, it suffices to show that $\delta(B)\equiv\delta(PBP^t)\pmod{4}$ for $P$ the permutation matrix of a simple transposition. For that, note that the matrix
$$T:=-b_{k,k+1}(\varepsilon_{k,k+1}+\varepsilon_{k+1,k})$$ has all diagonal terms equal to zero; it is also symmetric. By Lemma \ref{lem:perm1} above, Lemma \ref{lem:2}.(2) can be applied to the symmetric matrix $R:=P\SM(B)P^t$ and the matrix $Y:=T$. Lemma \ref{lem:2}.(2) implies:
$$\det(\SM(PBP^t))\stackrel{\ref{lem:perm1}}{\equiv}\det(P\SM(B)P^t+2\cdot T)\stackrel{\ref{lem:2}.(2)}{\equiv}\det(P\SM(B)P^t)\equiv \det(\SM(B))\pmod{4}.$$
Thus, for any permutation $P\in M_n(\Z)$, the equality $\delta(PBP^t)\equiv\delta(B)\pmod{4}$ is satisfied.\hfill$\Box$

%%%%%%%%%%%%%%%
%%%%%%%%%%%%%%%
%%%%%%%%%%%%%%%
%%%%%%%%%%%%%%%

\subsection{Proof of Theorem \ref{thm:main}}\label{ssec:mutinv} The goal is to show $$\det(\SM(\mu_k(B)))\equiv \det(\SM(B))\pmod{4}$$
for any $k\in[n]$. Indeed, Lemma \ref{lem:perm_invariance} has already shown that the $\delta$-invariant remains unchanged under simultaneous permutations of rows and columns. It therefore suffices to establish invariance under matrix mutation.

Note that we have the identity
$\mu_k(B)=V(\mu_k(B))-V(\mu_k(B))^t$
and the identity
$$\mu_k(B)=M_kBM_k^t=M_k(V(B)-V(B)^t)M_k^t=M_kV(B)M_k^t-(M_kV(B)M_k^t)^t.$$
By Lemma \ref{lem:1} applied to $Z:=\mu_k(B)$, $V:=M_kV(B)M_k^t$ and $W:=V(\mu_k(B))$, the difference
$$T:=M_kV(B)M_k^t-V(\mu_k(B))$$
is a symmetric matrix $T\in M_n(\Z)$. Therefore

\begin{equation}\label{eq:1}
 \tag{**}
\begin{split}
M_k\SM(B)M_k^t-\SM(\mu_k(B)) & = M_k(V(B)+V(B)^t)M_k^t - (V(\mu_k(B))+V(\mu_k(B))^t)\\
 & = (M_kV(B)^tM_k^t-V(\mu_k(B))^t)+(M_k(V(B))M_k^t-V(\mu_k(B)))\\
 & = 2T.
\end{split}
\end{equation}
By Lemma \ref{lem:3}, the diagonal entries $t_{ii}$ of $T$ are even for all $i\in[n]$. Therefore the reduction of $T$ modulo 2 is a symmetric matrix with zero diagonal terms and the reduction of $2T$ modulo 4 is a symmetric matrix with zero diagonal terms. Let $T^\circ\in M_n(\Z)$ be the unique integer matrix whose entries are $0$ or $1$ and $T^\circ\equiv T\pmod{2}$. In particular, all diagonal terms of $T^\circ$ are zero. We ease notation with $M:=M_k$. Let us apply Lemma \ref{lem:2}.(2) to $R:=M\SM(B)M^t$ and $Y:=T^\circ$. Then
$$\det(M\SM(B)M^t)\stackrel{\ref{lem:2}.(2)}{\equiv} \det(M\SM(B)M^t+2T^\circ)\pmod{4}.$$
By construction, $2T\equiv 2T^\circ\pmod{4}$ and thus $\det(M\SM(B)M^t)\equiv \det(M\SM(B)M^t+2T)\pmod{4}$. Since $M$ is unimodular, we also have $\det(M\SM(B)M^t)=\det(\SM(B))$. Therefore
$$\det(\SM(\mu_k(B)))\stackrel{(\ref{eq:1})}{\equiv}\det(M\SM(B)M^t+2T)\stackrel{\ref{lem:2}.(2)}{\equiv}\det(M\SM(B)M^t)\equiv \det(\SM(B))\pmod{4}.$$
In conclusion, $\det(\SM(B))$ modulo 4 is a mutation invariant. \hfill$\Box$

\begin{remark}
At the core of the argument is the equality
$$M\SM(B)M^t-\SM(\mu_k(B))= 2T,$$
where $T$ has all diagonal entries even and $M=M_k$ is a replicating matrix for $B$. Let $\S\sse M_n(\Z)$ denote the subset of symmetric matrices. Any function $f:\S\lr \mathcal{C}$ to a set $\mathcal{C}$ such that $$f(S)=f(MSM^t-2T),\quad \forall T\in\S\mbox{ such that }T_{ii}=0\quad \forall i\in[n],$$
gives a candidate mutation invariant $\delta_f(B):=f(\SM(B))$. There are choices of $f$ that lead to trivial invariants. For instance, if $f(A)=\tr(A)\pmod{2}$, then the associated invariant $\delta_f(B)=\tr(\SM(B))\pmod{2}$ would be always zero, since the diagonal terms of $\SM(B)$ are all even. Note that any $f$ which is a congruence invariant and satisfies the equality in Lemma \ref{lem:2}.(2) must satisfy the condition above. The binary $\delta$-invariant we defined above corresponds to $f(A)=\det(A)\pmod{4}$. Indeed, the determinant modulo 4 satisfies the condition above by virtue of being a congruence invariant and Lemma \ref{lem:2}.(2). Finally, we remark that the multiset of invariant factors of the Smith normal form (even modulo 4), which is a congruence invariant, does not satisfy the equality in Lemma \ref{lem:2}.(2), even if one restricts to symmetric matrices with $2$ in the diagonal entries.\hfill$\Box$
\end{remark}

%%%%%%%%%%%%%%%
%%%%%%%%%%%%%%%
%%%%%%%%%%%%%%%
%%%%%%%%%%%%%%%

\section{Examples and comments}\label{sec:examples}

The following are some computations and remarks with regards to the $\delta$-invariant. Proposition \ref{prop:not_cong} is proven in Subsection \ref{ssec:prop_not_cong} below.

\subsection{Two $5\times 5$ congruent matrices with different $\d$-invariants} Consider the two matrices

$$B=\left(\begin{array}{rrrrr}
0 & 5 & 26 & 101 & 74 \\
-5 & 0 & 10 & 38 & 27 \\
-26 & -10 & 0 & 27 & 34 \\
-101 & -38 & -27 & 0 & 83 \\
-74 & -27 & -34 & -83 & 0
\end{array}\right),\qquad
B'=\left(\begin{array}{rrrrr}
0 & 693 & 6624 & 4074 & -8446 \\
-693 & 0 & 1136 & 1853 & -4238 \\
-6624 & -1136 & 0 & 11029 & -26677 \\
-4074 & -1853 & -11029 & 0 & -2349 \\
8446 & 4238 & 26677 & 2349 & 0
\end{array}\right).$$

The matrices $B,B'$ are congruent $B'=XBX^t$ via the matrix

$$X=\left(\begin{array}{rrrrr}
0 & 2 & -3 & 4 & -13 \\
0 & 1 & -2 & 4 & -11 \\
-1 & 4 & -11 & 32 & -85 \\
-1 & -2 & 0 & 13 & -31 \\
1 & 2 & 4 & -25 & 56
\end{array}\right),
$$
where $\det(X)=1$. By virtue of being congruent, the Smith elementary divisors of $B$ and $B'$ coincide, even with multiplicity. They are 1 with multiplicity four and 0 with multiplicity one. Similarly, the determinant of both $B,B'$ is 0 and their rank is 4. For both $B,B'$, the greatest common divisors of each of their columns all equal 1. The $\d$-invariants are $\d(B)=0$ and $\d(B')=2$. The appendix contains more examples of such congruent pairs $B,B'$, generated randomly with SageMath, and the code to generate them.

\subsection{A simple example in all odd sizes}\label{ssec:prop_not_cong} Let us prove Proposition \ref{prop:not_cong}. Consider the matrix $A_n\in M_n(\Z)$ for any odd $n\in\N$:
$$A_n=\sum_{i=1}^n\varepsilon_{i,i+1}-\sum_{i=1}^n\varepsilon_{i+1,i},$$
where $\varepsilon_{ij}$ is the $(i,j)$-elementary matrix. It is skew-symmetric. A direct computation shows
$$\delta(A_n)=\begin{cases}
  0  & \text{ if }n\equiv3\pmod{4}, \\
  2  & \text{ if }n\equiv1\pmod{4}.
\end{cases}$$

Consider the unimodular matrix $X_n\in M_n(\Z)$ defined as $X_n:=\mbox{Id}_n+\varepsilon_{1n}$. Then, congruence by $X$ affects the $\delta$-invariant of $A_n$ as follows:
$$\delta(X_nA_nX_n^t)=\begin{cases}
  2  & \text{ if }n\equiv3\pmod{4}, \\
  0  & \text{ if }n\equiv1\pmod{4}.
\end{cases}$$
Indeed, for $n\geq5$, $X_nA_nX_n^t=A_n+(\varepsilon_{n-1,1}-\varepsilon_{1,n-1})$ and the above values of the $\delta$-invariant follow. The case $n=3$ is also covered by this construction. This concludes Proposition \ref{prop:not_cong}: for any odd $n\in\N$, $n\geq3$, the $\delta$-invariant is not a congruence invariant.

\begin{comment}
\subsection{Comparison to Arf invariant of skew-symmetric principal extensions} Since $\c$ is a binary invariant, one might seek a relationship with the Arf invariant. In the case of odd rank, the Arf invariant is not defined: one can double the rank by consider the skew-symmetric principal extension and study whether the Arf invariant for the extension is of relevance to $\c$. Consider the two matrices

$$B=\left(\begin{array}{rrrrr}
0 & 1 & 0 & 0 & 0 \\
-1 & 0 & 1 & 0 & 0 \\
0 & -1 & 0 & 1 & 0 \\
0 & 0 & -1 & 0 & 1 \\
0 & 0 & 0 & -1 & 0
\end{array}\right),\qquad
B'=\left(\begin{array}{rrrrr}
0 & -1 & 1 & 0 & 0 \\
1 & 0 & -1 & 0 & 0 \\
-1 & 1 & 0 & 1 & 0 \\
0 & 0 & -1 & 0 & 1 \\
0 & 0 & 0 & -1 & 0
\end{array}\right).$$

The matrix $B$ is an exchange matrix for a linear $A_5$-quiver and $B'$ is one mutation away from $B$. Both their $\c$-invariants are $\c(B)=\c(B')=2$. Let $\widetilde{B},\widetilde{B'}\in M_{10}(\Z)$ be their principal skew-symmetric extensions: 
$$
\widetilde{B}:=\left[
\begin{array}{c|c}
B & -I_5\\ \hline
I_5 & 0_{5\times5}
\end{array}\right],\qquad
\widetilde{B'}:=\left[
\begin{array}{c|c}
B' & -I_5\\ \hline
I_5 & 0_{5\times5}
\end{array}\right].
$$
These are non-degenerate binary forms with $\det(\widetilde{B})=\det(\widetilde{B'})=1$. Their Arf invariants are $\Arf(\widetilde{B})=1$ and $\Arf(\widetilde{B'})=0$. Thus there does not seem to be a direct relation between the $\c$-invariant and the Arf invariant of a skew-symmetric principal extension.\hfill$\Box$
\end{comment}

\subsection{Comparison to Markov constant in rank 3}\label{sssec:compare_Markov} In the rank three case $n=3$, we can write
$$B:=\left(\begin{array}{rrr}
0 & p & q \\
-p & 0 & r \\
-q & -r & 0
\end{array}\right)$$
for some integers $p,q,r\in\Z$. The articles \cite{ABB08,FT19} introduce and study an integer quantity $C(B)$ to each skew-symmetric matrix $B$ which is invariant under mutation. Consider the Markov constant $C(p,q,r)=p^2+q^2+r^2-pqr$ associated to $p,q,r\in\Z$. The mutation invariant is defined as $C(B):=C(p,q,-r)$ if the underlying quiver $Q(B)$ is acyclic, and as $C(B):=C(p,q,r)$ if the underlying quiver $Q(B)$ contains a cycle. A direct computation shows that
$$\det(\SM(B))=2 \, p q r - 2 \, p^{2} - 2 \, q^{2} - 2 \, r^{2} + 8.$$
Therefore $\d(B)\equiv 2\cdot C(B) \pmod{4}$.

\subsection{Comparison to Arf invariant}
The Arf invariant of a quadratic form in a $\Z_2$-vector space is a binary invariant. It might be natural to compare the $\delta$-invariant to the Arf invariant of a quadratic form associated to an integer skew-symmetric matrix $B\in M_n(\Z)$. Let $N\sse \Z_2^n$ be a free $\Z_2$-submodule of rank $r=\mbox{rk}(B)$ such that $B|_N$ is a non-degenerate skew-symmetric bilinear form. Consider the basis $v_1,\ldots,v_n$ of vectors of $\Z_2^n$ in which the matrix $B$ represents a non-degenerate bilinear skew-symmetric form and assume that $N=\langle v_1,\ldots,v_r\rangle$. Consider the unique quadratic refinement $q_B:N\lr\Z_2$ of $B$ such that
$$q_B(v_i)=1,\qquad q_B(v+w)=q_B(v)+q_B(w)+v^tBw,\quad \forall v,w\in N,$$
where the vectors are understood as column vectors. We define the Arf invariant $\Arf(B)$ of $B$ (restricted to $N$) to be the Arf invariant $\Arf(q_B)$ of this quadratic refinement. See \cite[Chapter IV.4.7]{Knus91} for the definition of the Arf invariant of a quadratic form in characteristic 2. Consider the following two skew-symmetric matrices $B,B'\in M_n(\Z)$:

$$B=\left(\begin{array}{rrrrr}
0 & 0 & 1 & 1 & 0 \\
0 & 0 & -1 & 0 & 0 \\
-1 & 1 & 0 & -1 & 0 \\
-1 & 0 & 1 & 0 & 1 \\
0 & 0 & 0 & -1 & 0
\end{array}\right),\quad B'=\left(\begin{array}{rrrrr}
0 & 1 & 1 & 1 & 0 \\
-1 & 0 & -1 & 0 & 0 \\
-1 & 1 & 0 & -1 & 0 \\
-1 & 0 & 1 & 0 & 1 \\
0 & 0 & 0 & -1 & 0
\end{array}\right).$$

Their ranks are $\mbox{rk}(B)=\mbox{rk}(B)=4$. It is verified that $\delta(B)=2$ and $\delta(B')=0$ by direct computation. Let us now argue that $\Arf(B)=\Arf(B')=1$, and therefore the $\delta$-invariant $\delta(B)$ is different than the above defined $\Arf(B)$. We compute $\Arf(B)$ and $\Arf(B')$ by choosing symplectic bases of $B$ and $B'$.

The following vectors $\{e_1,f_1,e_2,f_2\}$ are a symplectic basis for $B$:
$$e_1=\left(0,\,0,\,1,\,0,\,0\right),\quad e_2=
\left(1,\,1,\,0,\,0,\,0\right),\quad
f_1=\left(0,\,1,\,0,\,0,\,0\right),\quad
f_2=\left(0,\,1,\,0,\,1,\,0\right).$$
That is, they satisfy $e_iBe_j=0$, $f_iBf_j=0$, $e_iBf_j=\delta_{ij}$, for all $i,j\in[2]$. In such a sympectic basis, the Arf invariant can be computed as
$$\Arf(q_B)=q_B(e_1)q_B(f_1)+q_B(e_2)q_B(f_2)=(1\cdot 1)+(1+1+B_{12})\cdot(1+1+B_{24})=1$$

Similarly, the following vectors $\{e'_1,f'_1,e'_2,f'_2\}$ are a symplectic basis for $B$:
$$e'_1=\left(1,\,0,\,0,\,0,\,0\right),\quad
e'_2=\left(0,\,-1,\,0,\,1,\,0\right),\quad 
f'_1=\left(0,\,1,\,0,\,0,\,0\right),\quad
f'_2=\left(0,\,0,\,0,\,0,\,1\right).$$
Then $\Arf(q_{B'})=q_{B'}(e'_1)q_{B'}(f'_1)+q_{B'}(e'_2)q_{B'}(f'_2)=(1\cdot 1)+(1+1+B'_{24})\cdot1=1$.

\subsection{Case of quivers from plabic fences}\label{sssec:compare_Alexander} Let $B=B_\beta\in M_n(\Z)$ be the skew-symmetric matrix associated to a plabic fence with positive braid word $\beta$, cf.~\cite[Section 12]{FPST}.\footnote{Here $B$ is considered as a square matrix, with no frozen variables. For simplicity, plabic fences will be considered with all vertical edges with black on top and white on bottom.} Suppose that the 0-framed closure of $\beta$ is a knot, i.e.~the permutation associated to $\beta$ acts transitively on $[n]$. Then $\d(B)\equiv \Delta_\beta(-1)\pmod{4}$ where $\Delta_\beta$ is the Alexander polynomial of the knot $K_\beta$. Note that the Arf invariant of the knot $K_\beta$ is $\Arf(K_\beta)=0$ if $\Delta_\beta(-1)\equiv \pm1\pmod{8}$ and $\Arf(K_\beta)=1$ if $\Delta_\beta(-1)\equiv \pm3\pmod{8}$. In these cases, the $\delta$-invariant does not necessarily determine the Arf invariant. To wit, present the $(2,3)$ and the $(2,7)$-torus knots as the 0-framed closures of the 2-stranded positive braid words $\beta_1=\sigma_1^3$ and $\beta_2=\sigma_1^7$, respectively. These give examples of $B,B'$ with different Arf invariants but same $\d$-invariant.

\begin{remark}
The existence or non-existence of a reddening sequence is another binary mutation invariant of $B$, see \cite[Corollary 19]{Muller16}. It is a non-trivial invariant. It does not coincide with the $\delta$-invariant. To wit, the $A_n$ linear quiver admits a reddening sequence for all $n\in\N$. As stated in Subsection \ref{ssec:prop_not_cong}, for $n\equiv1\pmod{4}$, its $\delta$-invariant is $2$. For $n\equiv3\pmod{4}$, its $\delta$-invariant is $0$.\hfill$\Box$
\end{remark}

\section{Appendix}\label{sec:app}

Here are two more explicit examples of pairs $(B,X)$ with $B,X\in M_n(\Z)$ such that $X$ is unimodular, $B$ is skew-symmetric and $B$ is not mutation equivalent to $XBX^t$, distinguished by the $\delta$-invariant.

\noindent For $n=9$:

$B=\left(\begin{array}{rrrrrrrrr}
0 & 4 & 13 & -3 & -22 & 35 & -74 & 201 & 190 \\
-4 & 0 & 0 & 2 & -31 & -61 & 54 & -48 & 32 \\
-13 & 0 & 0 & -1 & -1 & 25 & -37 & 77 & 55 \\
3 & -2 & 1 & 0 & 12 & 93 & -123 & 230 & 124 \\
22 & 31 & 1 & -12 & 0 & 109 & -129 & 211 & 71 \\
-35 & 61 & -25 & -93 & -109 & 0 & -92 & 118 & 18 \\
74 & -54 & 37 & 123 & 129 & 92 & 0 & -514 & -175 \\
-201 & 48 & -77 & -230 & -211 & -118 & 514 & 0 & 97 \\
-190 & -32 & -55 & -124 & -71 & -18 & 175 & -97 & 0
\end{array}\right)$

$X=\left(\begin{array}{rrrrrrrrr}
4 & -14 & -38 & -20 & 73 & 76 & 204 & 807 & 2977 \\
3 & -5 & -22 & -5 & 18 & 43 & 119 & 546 & 2214 \\
-5 & 14 & 49 & 14 & -47 & -102 & -296 & -1316 & -5267 \\
5 & -5 & -25 & -4 & 17 & 46 & 113 & 555 & 2307 \\
2 & -3 & -16 & -6 & 21 & 27 & 73 & 306 & 1170 \\
-2 & 5 & 17 & 5 & -18 & -33 & -88 & -398 & -1590 \\
-1 & 4 & 10 & 6 & -22 & -20 & -53 & -201 & -718 \\
-4 & 13 & 45 & 14 & -50 & -88 & -246 & -1084 & -4285 \\
-3 & 8 & 28 & 8 & -28 & -54 & -151 & -674 & -2684
\end{array}\right).$

\noindent For $n=13$, we can choose the skew-symmetric matrix $B$ to be
$$\left(\begin{array}{rrrrrrrrrrrrr}
0 & 2 & -2 & 11 & -31 & -35 & 128 & -408 & 86 & -316 & -1288 & -4120 & -8869 \\
-2 & 0 & 1 & -12 & 42 & 65 & -255 & 706 & 17 & -168 & 2475 & 11745 & 26626 \\
2 & -1 & 0 & 10 & -33 & -49 & 190 & -534 & 3 & 59 & -1851 & -8455 & -19089 \\
-11 & 12 & -10 & 0 & -6 & 6 & -45 & 53 & 121 & -541 & 368 & 4295 & 10366 \\
31 & -42 & 33 & 6 & 0 & 18 & -136 & 157 & 385 & -1714 & 1116 & 13387 & 32336 \\
35 & -65 & 49 & -6 & -18 & 0 & -32 & 293 & -352 & 1512 & 502 & -5255 & -13801 \\
-128 & 255 & -190 & 45 & 136 & 32 & 0 & -327 & 34 & -98 & -1084 & -4297 & -9552 \\
408 & -706 & 534 & -53 & -157 & -293 & 327 & 0 & -211 & 1041 & -3078 & -18715 & -43439 \\
-86 & -17 & -3 & -121 & -385 & 352 & -34 & 211 & 0 & 332 & -539 & -4019 & -9493 \\
316 & 168 & -59 & 541 & 1714 & -1512 & 98 & -1041 & -332 & 0 & -1229 & -15981 & -38721 \\
1288 & -2475 & 1851 & -368 & -1116 & -502 & 1084 & 3078 & 539 & 1229 & 0 & 11612 & 28373 \\
4120 & -11745 & 8455 & -4295 & -13387 & 5255 & 4297 & 18715 & 4019 & 15981 & -11612 & 0 & 23684 \\
8869 & -26626 & 19089 & -10366 & -32336 & 13801 & 9552 & 43439 & 9493 & 38721 & -28373 & -23684 & 0
\end{array}\right)$$
and the unimodular matrix $X$ to be
$$X=\left(\begin{array}{rrrrrrrrrrrrr}
-5 & 9 & 10 & -64 & 112 & -539 & 1968 & 4901 & 7622 & 10674 & 28116 & 7513 & 25176 \\
2 & 1 & 13 & -80 & 68 & -436 & 1745 & 4451 & 7043 & 10350 & 26013 & 8751 & 31233 \\
3 & -3 & -2 & 10 & -28 & 126 & -446 & -1102 & -1687 & -2315 & -6263 & -1421 & -4527 \\
-3 & 0 & -7 & 51 & -32 & 237 & -982 & -2530 & -4041 & -6051 & -14906 & -5465 & -19870 \\
2 & -5 & -12 & 73 & -86 & 486 & -1873 & -4735 & -7425 & -10714 & -27461 & -8411 & -29361 \\
2 & -3 & -2 & 13 & -31 & 137 & -483 & -1191 & -1838 & -2519 & -6778 & -1605 & -5161 \\
-1 & 2 & 11 & -60 & 64 & -380 & 1486 & 3768 & 5915 & 8584 & 21895 & 6870 & 24140 \\
0 & 2 & 11 & -69 & 68 & -405 & 1597 & 4057 & 6399 & 9331 & 23634 & 7676 & 27175 \\
-1 & 3 & 11 & -66 & 75 & -423 & 1629 & 4110 & 6449 & 9287 & 23828 & 7287 & 25411 \\
2 & -3 & -8 & 48 & -60 & 332 & -1275 & -3225 & -5053 & -7284 & -18687 & -5691 & -19838 \\
-1 & -4 & -19 & 118 & -110 & 681 & -2700 & -6874 & -10846 & -15869 & -40096 & -13149 & -46672 \\
0 & 2 & 13 & -77 & 79 & -469 & 1836 & 4653 & 7319 & 10629 & 27053 & 8590 & 30254 \\
3 & -4 & -5 & 28 & -52 & 250 & -907 & -2260 & -3504 & -4901 & -12943 & -3391 & -11301
\end{array}\right).$$

Examples for higher $n$ can be readily found, but they do not fit in this document with the standard fontsize. The examples above hopefully illustrate that some pairs of congruent matrices can be shown to be not mutation equivalent with the $\delta$-invariant.

The following function ``Non\_MutationEquivalent\_but\_congruent'', written in Python, can generate examples of such $B,X$ with SageMath 9.0:

\begin{lstlisting}[language=Python]
def triu(m):
    t = matrix(m.base_ring(), m.nrows(), sparse=True)
    for (i,j) in m.nonzero_positions():
        if i < j:
            t[i,j] = m[i,j]
    return t+matrix.identity(m.nrows())

def Non_MutationEquivalent_but_congruent(iter,size):
    for i in range(0,iter):
        N=random_matrix(ZZ, size, algorithm='unimodular', upper_bound=50000);
        V=triu(N);
        B=V-V.transpose();
        C=V+V.transpose();
        print('The matrix B is:');
        show(B);
        d=C.determinant();
        X=random_matrix(ZZ, size, algorithm='unimodular', upper_bound=50000);
        print('The matrix X is:');
        show(X);
        Bp=X*B*(X.transpose());
        Vp=triu(Bp);
        Cp=Vp+Vp.transpose();
        dp=Cp.determinant();
        print('The mod 4 invariant of B is:',mod(d,4));
        print('The mod 4 invariant of XBXt is:',mod(dp,4));
        if mod(dp,4)!=mod(d,4):
            print('The invariants are different for this congruent pair.');
        if mod(dp,4)==mod(d,4):
            print('The invariants are equal for this congruent pair.');
        print('------------ End of case ------------:', i);
\end{lstlisting}

\bibliographystyle{alpha}
\bibliography{main}

\end{document}